\numberwithin{equation}{section}
\newtheorem{theorem}[equation]{Theorem}
\newtheorem*{theorem*}{Theorem}
\newtheorem*{conjecture*}{Mamma Conjecture}
\newtheorem*{conjecture1*}{Mamma Conjecture (revisited)}
\newtheorem{proposition}[equation]{Proposition}
\newtheorem*{corollary*}{Corollary}
\theoremstyle{remark}
\theoremstyle{remark}
\newtheorem{remark}[equation]{Remark}
\newcommand{\cB}{{\mathcal B}}
\newcommand{\cC}{{\mathcal C}}
\newcommand{\cL}{{\mathcal L}}
\newcommand{\cO}{{\mathcal O}}
\newcommand{\cX}{{\mathcal X}}
\newcommand{\bbA}{\mathbb{A}}
\newcommand{\bbC}{\mathbb{C}}
\newcommand{\bbG}{\mathbb{G}}
\newcommand{\bbP}{\mathbb{P}}
\newcommand{\bbQ}{\mathbb{Q}}
\newcommand{\bbZ}{\mathbb{Z}}
\DeclareMathOperator{\id}{id}
\newcommand{\dgcat}{\mathrm{dgcat}} 
\newcommand{\perf}{\mathrm{perf}}
\newcommand{\dg}{\mathrm{dg}}
\newcommand{\uHom}{\underline{\mathrm{Hom}}}
\newcommand{\Hom}{\mathrm{Hom}}
\newcommand{\too}{\longrightarrow}
\newcommand{\ie}{\textsl{i.e.}\ }
\let\oldmarginpar\marginpar
\def\marginpar#1{\oldmarginpar{\tiny #1}}
\begin{document}

\title[Kimura-finiteness of quadric fibrations over smooth curves]{Kimura-finiteness of \\quadric fibrations over smooth curves}
\author{Gon{\c c}alo~Tabuada}

\address{Gon{\c c}alo Tabuada, Department of Mathematics, MIT, Cambridge, MA 02139, USA}
\email{tabuada@math.mit.edu}
\urladdr{http://math.mit.edu/~tabuada}
\thanks{The author was partially supported by a NSF CAREER Award}

\date{\today}


\abstract{In this short note, making use of the recent theory of noncommutative mixed motives, we prove that the Voevodsky's mixed motive of a quadric fibration over a smooth curve is Kimura-finite.}
}

\maketitle
\vskip-\baselineskip
\vskip-\baselineskip



\section{Introduction}
 Let $(\cC,\otimes, {\bf 1})$ be a $\bbQ$-linear, idempotent complete, symmetric monoidal category. Given a partition $\lambda$ of an integer $n\geq 1$, consider the corresponding irreducible $\bbQ$-linear representation $V_\lambda$ of the symmetric group $\mathfrak{S}_n$ and the associated idempotent $e_\lambda\in \bbQ[\mathfrak{S}_n]$. Under these notations, the Schur-functor $S_\lambda\colon \cC \to \cC$ sends an object $a$ to the direct summand of $a^{\otimes n}$ determined by $e_\lambda$. In the particular case of the partition $\lambda=(1,\ldots, 1)$, resp. $\lambda=(n)$, the associated Schur-functor $\wedge^n:=S_{(1,\ldots, 1)}$, resp. $\mathrm{Sym}^n:=S_{(n)}$, is called the {\em $n^{\mathrm{th}}$ wedge product}, resp. the {\em $n^{\mathrm{th}}$ symmetric~product}. Following Kimura \cite{Kimura}, an object $a \in \cC$ is called {\em even-dimensional}, resp. {\em odd-dimensional}, if $\wedge^n(a)$, resp. $\mathrm{Sym}^n(a)=0$, for some $n \gg 0$. The biggest integer $\mathrm{kim}_+(a)$, resp. $\mathrm{kim}_-(a)$, for which $\wedge^{\mathrm{kim}_+(a)}\neq 0$, resp. $\mathrm{Sym}^{\mathrm{kim}_-(a)}(a)\neq 0$, is called the {\em even}, resp. {\em odd}, {\em Kimura-dimension of $a$}. An object $a \in \cC$ is called {\em Kimura-finite} if $a\simeq a_+\oplus a_-$, with $a_+$ even-dimensional and $a_-$ odd-dimensional. The integer $\mathrm{kim}(a)=\mathrm{kim}_+(a_+)+\mathrm{kim}_-(a_-)$ is called the {\em Kimura-dimension of $a$}.
 
Voevodsky introduced in \cite{Voevodsky} an important triangulated category of geometric mixed motives $\mathrm{DM}_{\mathrm{gm}}(k)_\bbQ$ (over a perfect base field $k$). By construction, this category is $\bbQ$-linear, idempotent complete, rigid symmetric monoidal, and comes equipped with a symmetric monoidal functor $M(-)_\bbQ \colon \mathrm{Sm}(k) \to \mathrm{DM}_{\mathrm{gm}}(k)_\bbQ$, defined on smooth $k$-schemes. An important open problem\footnote{Among other consequences, Kimura-finiteness implies rationality of the motivic zeta function.} is the classification of all the Kimura-finite mixed motives and the computation of the corresponding Kimura-dimensions. On the negative side, O'Sullivan constructed a certain smooth surface $S$ whose mixed motive $M(S)_\bbQ$ is {\em not} Kimura-finite; consult \cite[\S5.1]{Mazza} for details. On the positive side, Guletskii \cite{Guletskii} and Mazza \cite{Mazza} proved, independently, that the mixed motive $M(C)_\bbQ$ of every smooth curve $C$ is Kimura-finite. 

The following result bootstraps Kimura-finiteness from smooth curves to families of quadrics over smooth curves:
\begin{theorem}\label{thm:Kimura}
Let $k$ be a field, $C$ a smooth $k$-curve, and $q\colon Q \to C$ a flat quadric fibration of relative dimension $d-2$. Assume that $Q$ is smooth and that $q$ has only {\em simple degenerations}, \ie  that all the fibers of $q$ have corank $\leq 1$. 
\begin{itemize}
\item[(i)] When $d$ is even, the mixed motive $M(Q)_\bbQ$ is Kimura-finite. Moreover, we have
$$\mathrm{kim}(M(Q)_\bbQ) = \mathrm{kim}(M(\widetilde{C})_\bbQ) + (d-2) \mathrm{kim}(M(C)_\bbQ)\,,$$
where $D \hookrightarrow C$ stands for the finite set of critical values of $q$ and $\widetilde{C}$ for the discriminant double cover of $C$ (ramified over $D$).
\item[(ii)] When $d$ is odd, $k$ is algebraically closed, and $1/2 \in k$, the mixed motive $M(Q)_\bbQ$ is Kimura-finite. Moreover, we have the following equality:
$$\mathrm{kim}(M(Q)_\bbQ) = \# D + (d-1) \mathrm{kim}(M(C)_\bbQ)\,.$$
\end{itemize}
\end{theorem}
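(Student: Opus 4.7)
The plan is to reduce Kimura-finiteness of $M(Q)_\bbQ$ to Kimura-finiteness of the mixed motives of smooth curves, and then to invoke the Guletskii--Mazza theorem. The reduction passes through the theory of noncommutative mixed motives alluded to in the abstract, which serves as a natural receptacle for semi-orthogonal decompositions.

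First, I would invoke Kuznetsov's semi-orthogonal decomposition for the derived category of a quadric fibration,
\[
\perf(Q) \;=\; \bigl\langle \perf(C;\cC_0),\; \perf(C)_1,\ldots,\perf(C)_{d-2}\bigr\rangle ,
\]
where $\cC_0$ is the sheaf of even parts of the Clifford algebra of $q$ and each $\perf(C)_i$ is a copy of $\perf(C)$ embedded via $q^\ast(-)\otimes\cO_{Q/C}(i)$. Passing to the universal functor to noncommutative mixed motives yields a direct sum splitting
\[
U(Q) \;\simeq\; U(C;\cC_0)\,\oplus\, U(C)^{\oplus(d-2)} .
\]

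Second, I would simplify the Clifford summand using the simple degenerations hypothesis together with Kuznetsov's structural results on $\cC_0$ in that setting. When $d$ is even, $(C,\cC_0)$ is Morita-equivalent to $(\widetilde{C},\cB)$ for a sheaf of Azumaya algebras $\cB$ on the double cover $\widetilde{C}$; after rationalization, the Brauer class of $\cB$ becomes motivically trivial, so that $U(C;\cC_0)_\bbQ\simeq U(\widetilde{C})_\bbQ$. When $d$ is odd, $k$ is algebraically closed and $1/2\in k$, the analogous analysis identifies $U(C;\cC_0)_\bbQ$ with $U(C)_\bbQ\oplus U(D)_\bbQ$, the extra summand capturing the contribution of the singular fibers indexed by the critical set $D\hookrightarrow C$.

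Third, I would transfer these decompositions from noncommutative mixed motives back to $\mathrm{DM}_{\mathrm{gm}}(k)_\bbQ$ by means of the bridge between the two theories. Up to Tate twists and shifts, this produces isomorphisms
\[
M(Q)_\bbQ \;\simeq\; M(\widetilde{C})_\bbQ \,\oplus\, \bigoplus_{i=1}^{d-2} M(C)_\bbQ(i)[2i]
\]
in case (i) and
\[
M(Q)_\bbQ \;\simeq\; M(C)_\bbQ\,\oplus\,M(D)_\bbQ \,\oplus\, \bigoplus_{i=1}^{d-2} M(C)_\bbQ(i)[2i]
\]
in case (ii). Since direct sums preserve Kimura-finiteness, Tate twists and shifts preserve the Kimura-dimension, and $\mathrm{kim}(M(D)_\bbQ)=\#D$, both Kimura-finiteness of $M(Q)_\bbQ$ and the stated dimension formulas follow from the Guletskii--Mazza theorem applied to $C$ and, in the even case, to the smooth curve $\widetilde{C}$.

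The principal technical obstacle is the second step: making precise the simplification $U(C;\cC_0)_\bbQ\simeq U(\widetilde{C})_\bbQ$, and its odd analogue, requires Kuznetsov-type results specialized to simple degenerations together with the $\bbQ$-rational triviality of the relevant Azumaya algebras in noncommutative mixed motives. Transferring the final decomposition through the bridge to Voevodsky's category should be routine once the noncommutative statement is in place.
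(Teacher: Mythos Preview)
Your first two steps match the paper's approach exactly: Kuznetsov's semi-orthogonal decomposition followed by the simplification of the Clifford piece (via the Azumaya description on $\widetilde{C}$ when $d$ is even, and via the root-stack description $\perf(C;\cC_0)\simeq\perf(\sqrt[2]{(\cL,\sigma)/C})$ when $d$ is odd). The gap is in your third step.

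The bridge between Voevodsky's and Kontsevich's theories does \emph{not} land in $\mathrm{DM}_{\mathrm{gm}}(k)_\bbQ$; it is a fully faithful functor $\Phi$ defined on the \emph{orbit category} $\mathrm{DM}_{\mathrm{gm}}(k)_\bbQ/_{-\otimes\bbQ(1)[2]}$. Hence from $U(Q)_\bbQ\simeq U(\widetilde{C})_\bbQ\oplus U(C)_\bbQ^{\oplus(d-2)}$ you only deduce
\[
\pi(M(Q)_\bbQ)\;\simeq\;\pi\bigl(M(\widetilde{C})_\bbQ\oplus M(C)_\bbQ^{\oplus(d-2)}\bigr)
\]
in the orbit category, not the explicit decomposition $M(Q)_\bbQ\simeq M(\widetilde{C})_\bbQ\oplus\bigoplus_i M(C)_\bbQ(i)[2i]$ you wrote down. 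Lifting an orbit-category isomorphism to a specific isomorphism in $\mathrm{DM}_{\mathrm{gm}}(k)_\bbQ$ with prescribed Tate twists is not routine and is not what the paper does. Instead, the paper unpacks the orbit-category isomorphism into finitely many components $f_n,g_n$ and observes that $\beta\circ\alpha=\id$ exhibits $M(Q)_\bbQ$ merely as a \emph{direct summand} of $\bigoplus_{n=-N}^{N}(M(\widetilde{C})_\bbQ\oplus M(C)_\bbQ^{\oplus(d-2)})(n)[2n]$; Kimura-finiteness then follows because it is stable under direct sums, tensor products, and direct summands. For the Kimura-dimension formula you likewise cannot read off the twists; the paper instead uses that $\pi$ is symmetric monoidal and faithful, so $\mathrm{kim}(M(X)_\bbQ)=\mathrm{kim}(\pi(M(X)_\bbQ))$, and computes the dimension directly from the orbit-category isomorphism. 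Your outline becomes correct once you replace the claimed motivic decomposition by these two arguments.
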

To the best of the authors' knowledge, Theorem \ref{thm:Kimura} is new in the literature. It not only provides new (families of) examples of Kimura-finite mixed motives but also computes the corresponding Kimura dimensions.
\begin{remark}
In the particular case where $k$ is algebraically closed and $Q, C$ are moreover projective, Vial proved in \cite[Cor.~4.4]{Vial} that the Chow motive $\mathfrak{h}(Q)_\bbQ$ is Kimura-finite. Since the category of Chow motives embeds fully-faithfully into $\mathrm{DM}_{\mathrm{gm}}(k)_\bbQ$ (see \cite[\S4]{Voevodsky}), we then obtain in this particular case an alternative ``geometric'' proof of the Kimura-finiteness of $M(Q)_\bbQ$. Moreover, when $k=\bbC$ and $d$ is odd, Bouali refined Vial's work by showing that $\mathfrak{h}(Q)_\bbQ$ is isomorphic to $\bbQ(-\frac{d-1}{2})^{\oplus \# D} \oplus \bigoplus_{i=0}^{d-2} \mathfrak{h}(C)_\bbQ(-i)$; see \cite[Rk.~1.10(i)]{Bouali}. In this particular case, this leads to an alternative ``geometric'' computation of the Kimura-dimension~of~$M(Q)_\bbQ$.
\end{remark}
\section{Preliminaries}
In what follows, $k$ denotes a base field.
\subsection*{Dg categories}
For a survey on dg categories consult Keller's ICM talk \cite{ICM-Keller}. 
In what follows, we write $\dgcat(k)$ for the category of (small) dg categories and dg functors. Every (dg) $k$-algebra gives naturally rise to a dg category with a single object. Another source of examples is provided by schemes/stacks since the category of perfect complexes $\perf(X)$ of every $k$-scheme $X$ (or, more generally, algebraic stack $\cX$) admits a canonical dg enhancement $\perf_\dg(X)$; see \cite[\S4.6]{ICM-Keller}\cite{LO}.
\subsection*{Noncommutative mixed motives}
For a book, resp. survey, on noncommutative motives consult \cite{book}, resp. \cite{survey}. Recall from~\cite[\S8.5.1]{book} the construction of Kontsevich's triangulated category of noncommutative mixed motives $\mathrm{NMot}(k)$; denoted by $\mathrm{NMot}^{\bbA^1}_{\mathrm{loc}}(k)$ in {\em loc. cit.} By construction, this category is idempotent complete, closed symmetric monoidal, and comes equipped with a symmetric monoidal functor $U \colon \dgcat(k) \to \mathrm{NMot}(k)$.
\subsection*{Root stacks}
Let $X$ be a $k$-scheme, $\cL$ a line bundle on $X$, $\sigma \in \Gamma(X,\cL)$ a global section, and $r>0$ an integer. In what follows, we write $D\hookrightarrow X$ for the zero locus of $\sigma$. Recall from \cite[Def.~2.2.1]{Codman} (see also \cite[Appendix B]{GW}) that the associated {\em root stack} is defined as the following fiber-product of algebraic stacks
$$
\xymatrix{
\sqrt[r]{(\cL,\sigma)/X} \ar[d] \ar[r] & [\bbA^1/\bbG_m] \ar[d]^-{\theta_r} \\
X \ar[r]_-{(\cL,\sigma)} & [\bbA^1/\bbG_m]\,,
}
$$
where $\theta_r$ stands for the morphism induced by the $r^{\mathrm{th}}$ power maps on $\bbA^1$ and $\bbG_m$.
\begin{proposition}\label{prop:aux}
We have an isomorphism $U(\sqrt[r]{(\cL,\sigma)/X})\simeq U(D)^{\oplus (r-1)}\oplus U(X)$ whenever $X$ and $D$ are $k$-smooth.
\end{proposition}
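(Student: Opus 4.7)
My plan is to deduce the isomorphism from a semiorthogonal decomposition of $\perf(\sqrt[r]{(\cL,\sigma)/X})$ and then apply the additivity of noncommutative mixed motives.

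First I would invoke the semiorthogonal decomposition of the derived category of a root stack established by Ishii--Ueda (and generalized by Bergh--Lunts--Schn\"urer). Under the smoothness assumptions on $X$ and on the zero locus $D \hookrightarrow X$, there is a semiorthogonal decomposition
$$ \perf\bigl(\sqrt[r]{(\cL,\sigma)/X}\bigr) \;=\; \bigl\langle\, \perf(D),\, \perf(D),\, \ldots,\, \perf(D),\, \perf(X)\,\bigr\rangle, $$
with $(r-1)$ copies of $\perf(D)$ followed by one copy of $\perf(X)$. The admissible embeddings are given by pull-back along the projection to $X$ (for the copy of $\perf(X)$) and by push-forward along the closed embedding of $D$ into the root stack, twisted by tensoring with the tautological line bundle $\cL^{1/r}$ to degrees $i=1,\ldots,r-1$ (for the copies of $\perf(D)$).

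Next I would upgrade this to the dg setting by passing to the canonical dg enhancements $\perf_\dg(-)$. Since the admissible functors (pull-back, push-forward, twist) are all induced by Fourier--Mukai kernels and hence lift to morphisms in $\dgcat(k)$, the semiorthogonal decomposition above refines to a decomposition of $\perf_\dg(\sqrt[r]{(\cL,\sigma)/X})$ in the homotopy category of dg categories.

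Finally, I would apply the symmetric monoidal functor $U\colon \dgcat(k)\to\mathrm{NMot}(k)$. By the fundamental additivity property of noncommutative motives (recorded in \cite{book}), semiorthogonal decompositions at the level of dg enhancements become direct sum decompositions after applying $U$. This immediately yields
$$ U\bigl(\sqrt[r]{(\cL,\sigma)/X}\bigr) \;\simeq\; U(D)^{\oplus (r-1)} \oplus U(X), $$
as required. The main obstacle is really bookkeeping: one must ensure that the root-stack semiorthogonal decomposition is available at the dg level, which follows since all the semiorthogonal pieces are cut out by Fourier--Mukai kernels, but it is the step that requires the smoothness hypothesis on $D$ (to guarantee that the $\perf(D)$-pieces are themselves smooth and properly sit as admissible subcategories).
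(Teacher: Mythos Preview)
Your proposal is correct and follows essentially the same route as the paper: invoke the Ishii--Ueda semiorthogonal decomposition of $\perf$ of the root stack into $r-1$ copies of $\perf(D)$ and one copy of $\perf(X)$, then apply the additivity of $U$ with respect to semiorthogonal decompositions. Your only slight imprecision is in attributing the role of the smoothness hypothesis on $D$ to admissibility of the $\perf(D)$-pieces; the Ishii--Ueda decomposition already holds for an effective Cartier divisor, and smoothness is rather what makes the root stack itself smooth (so that $\perf$ and the usual dg enhancement behave as expected).
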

\begin{proof}
By construction, the root stack comes equipped with a forgetful morphism $f\colon \sqrt[r]{(\cL,\sigma)/X} \to X$. As proved by Ishii-Ueda in \cite[Thm.~1.6]{Ueda}, the pull-back functor $f^\ast$ is fully-faithful. Moreover, we have a semi-orthogonal decomposition 
$$ \perf(\cX) = \langle \perf(D)_{r-1}, \ldots, \perf(D)_1, f^\ast(\perf(X)) \rangle\,,$$
where all the categories $\perf(D)_i$ are equivalent (via a Fourier-Mukai type functor) to $\perf(D)$. Consequently, the proof follows from the fact that the functor $U$ sends semi-orthogonal decomposition to direct sums (see \cite[\S8.4.1 and \S8.4.5]{book}).
\end{proof}
\subsection*{Orbit categories}
Let $(\cC, \otimes, {\bf 1})$ be an $\bbQ$-linear symmetric monoidal additive category and $\cO \in \cC$ a $\otimes$-invertible object. The {\em orbit category} $\cC/_{\!-\otimes \cO}$ has the same objects as $\cC$ and morphisms $\Hom_{\cC/_{\!-\otimes \cO}}(a,b):=\oplus_{n \in \bbZ} \Hom_\cC(a, b \otimes \cO^{\otimes n})$. Given objects $a, b, c$ and morphisms $\mathrm{f}=\{f_n\}_{n \in \bbZ}$ and $\mathrm{g}=\{g_n\}_{n \in \bbZ}$, the $i^{\mathrm{th}}$-component of $\mathrm{g}\circ \mathrm{f}$ is defined as $\sum_n (g_{i -n} \otimes \cO^{\otimes n})\circ f_n$. The canonical functor $\pi\colon \cC \to \cC/_{\!-\otimes \cO}$, given by $a \mapsto a$ and $f \mapsto \mathrm{f}=\{f_n\}_{n \in \bbZ}$, where $f_0=f$ and $f_n=0$ if $n\neq 0$, is endowed with an isomorphism $\pi \circ (-\otimes \cO) \Rightarrow \pi$ and is $2$-universal among all such functors. Finally, the category $\cC/_{\!-\otimes \cO}$ is $\bbQ$-linear, additive, and inherits from $\cC$ a symmetric monoidal structure making $\pi$ symmetric monoidal.
\section{Proof of Theorem \ref{thm:Kimura}}
Following Kuznetsov \cite[\S3]{Quadrics} (see also Auel-Bernardara-Bolognesi \cite[\S1.2]{ABB}), let $E$ be a vector bundle of rank $d$ on $C$, $p\colon \bbP(E) \to C$ the projectivization of $E$ on $C$, $\cO_{\bbP(E)}(1)$ the Grothendieck line bundle on $\bbP(E)$, $\cL$ a line bundle on $C$, and finally $\rho\in \Gamma(C, S^2(E^\vee) \otimes \cL^\vee) = \Gamma(\bbP(E), \cO_{\bbP(E)}(2)\otimes \cL^\vee)$ a global section. Given this data, $Q\subset \bbP(E)$ is defined as the zero locus of $\rho$ on $\bbP(E)$ and $q\colon Q \to C$ as the restriction of $p$ to $Q$; the relative dimension of $q$ is equal to $d-2$. Consider also the discriminant global section $\mathrm{disc}(q) \in \Gamma(C, \mathrm{det}(E^\vee)^{\otimes 2} \otimes (\cL^\vee)^{\otimes d})$ and the associated zero locus $D\hookrightarrow C$. Note that $D$ agrees with the finite set of critical values of $q$. Recall from \cite[\S3.5]{Quadrics}(see also \cite[\S1.6]{ABB}) that, when $d$ is even we have a discriminant double cover $\widetilde{C}$ of $C$ ramified over $D$. Moreover, since by hypothesis $q$ has only simple degenerations, $\widetilde{C}$ is $k$-smooth. Under the above notations, we have the following computation:
\begin{proposition}\label{prop:computation}
Let $q\colon Q \to C$ be a flat quadric fibration as above.
\begin{itemize}
\item[(i)] When $d$ is even, we have an isomorphism $U(Q)_{\bbZ[1/2]} \simeq U(\widetilde{C})_{\bbZ[1/2]} \oplus U(C)_{\bbZ[1/2]}^{\oplus (d-2)}$.
\item[(ii)] When $d$ is odd, $k$ is algebraically closed, and $1/2\in k$, we have an isomorphism $U(Q) \simeq U(D) \oplus U(C)^{\oplus (d-1)}$.
\end{itemize}
\end{proposition}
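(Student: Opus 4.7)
The plan is to produce a single semi-orthogonal decomposition of $\perf(Q)$, apply the symmetric monoidal functor $U$, and then identify the resulting ``Clifford'' summand. Concretely, Kuznetsov's general analysis of quadric fibrations (see \cite[Thm.~4.2]{Quadrics}, \cite[\S1.5]{ABB}) produces a semi-orthogonal decomposition
\[
\perf(Q)=\langle \perf(C,\cC_0),\;\perf(C)_1,\ldots,\perf(C)_{d-2}\rangle,
\]
where $\cC_0$ denotes the even part of the sheaf of Clifford algebras attached to $(E,\rho,\cL)$, and each $\perf(C)_i$ is equivalent (via a Fourier--Mukai type kernel) to $\perf(C)$. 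Applying $U$ and invoking the fact that it sends semi-orthogonal decompositions to direct sums (see \cite[\S8.4.1 and \S8.4.5]{book}), we obtain
\[
U(Q)\simeq U(C,\cC_0)\oplus U(C)^{\oplus (d-2)}.
\]
Everything then reduces to computing the summand $U(C,\cC_0)$.

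For part (i) with $d$ even, Kuznetsov \cite[\S3.5]{Quadrics} and Auel--Bernardara--Bolognesi \cite[\S1.6]{ABB} show that, under the simple-degeneration hypothesis, $\cC_0$ becomes an Azumaya algebra $\widetilde{\cC}_0$ after pull-back to the discriminant double cover $\widetilde{C}$, and that there is a Morita equivalence $\perf(C,\cC_0)\simeq \perf(\widetilde{C},\widetilde{\cC}_0)$. The Brauer class of $\widetilde{\cC}_0$ is $2$-torsion, so it is trivialised after inverting $2$. Combined with the insensitivity of $U$ to Morita-trivial Azumaya twists (see, e.g., \cite[\S2.4.5]{book}), this yields $U(\widetilde{C},\widetilde{\cC}_0)_{\bbZ[1/2]}\simeq U(\widetilde{C})_{\bbZ[1/2]}$, whence~(i).

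For part (ii) with $d$ odd, $k$ algebraically closed, and $1/2\in k$, there is no discriminant double cover; instead, following Kuznetsov's analysis of the odd-rank case, one obtains a Morita equivalence between $\perf(C,\cC_0)$ and $\perf(\sqrt[2]{(\cN,\mathrm{disc}(q))/C})$ for a suitable line bundle $\cN$ on $C$. The hypotheses that $k$ is algebraically closed and $1/2\in k$ enter to guarantee, via Tsen's theorem on curves and the very existence of the even Clifford structure, that the residual Azumaya obstruction is split. Proposition~\ref{prop:aux}, applied with $r=2$ (the smoothness of $D$ being guaranteed by the simple-degeneration hypothesis), then yields $U(C,\cC_0)\simeq U(D)\oplus U(C)$; substituting back gives $U(Q)\simeq U(D)\oplus U(C)^{\oplus (d-1)}$.

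The main obstacle will be pinning down these Morita-theoretic identifications of $\cC_0$. In case (i) one must carefully track the Brauer class of $\widetilde{\cC}_0$ and explain why inverting $2$ in the coefficient ring of $\mathrm{NMot}(k)$ is exactly what is needed to trivialise it --- this is the source of the $\bbZ[1/2]$-coefficients in the statement. In case (ii) the genuinely new feature is that $\perf(C,\cC_0)$ is recognised as the derived category of a root stack rather than of a scheme, after which the computation of its motive is handled by Proposition~\ref{prop:aux}; all remaining ingredients are formal manipulations inside the symmetric monoidal category $\mathrm{NMot}(k)$.
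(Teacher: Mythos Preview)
Your proposal is correct and follows essentially the same route as the paper: Kuznetsov's semi-orthogonal decomposition of $\perf(Q)$, the additivity of $U$, the identification of $\perf(C,\cC_0)$ with $\perf(\widetilde{C},\cB_0)$ in the even case and with $\perf$ of the square-root stack in the odd case, and finally Proposition~\ref{prop:aux}. One small correction: in part~(i) the paper does not argue via the $2$-torsion of the Brauer class but rather via the fact that the Azumaya algebra $\cB_0$ has rank $2^{(d/2)-1}$, and the relevant reference for $U(\perf_\dg(\widetilde{C};\cB_0))_{\bbZ[1/2]}\simeq U(\widetilde{C})_{\bbZ[1/2]}$ is \cite[Thm.~2.1]{Azumaya} rather than \cite[\S2.4.5]{book}.
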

\begin{proof}
Recall from \cite[\S3]{Quadrics} (see also \cite[\S1.5]{ABB}) the construction of the sheaf $\cC_0$ of even parts of the Clifford algebra associated to $q$. As proved in \cite[Thm.~4.2]{Quadrics} (see also \cite[Thm.~2.2.1]{ABB}), we have a semi-orthogonal decomposition
$$ \perf(Q) = \langle \perf(C; \cC_0), \perf(C)_1, \ldots, \perf(C)_{d-2}\rangle\,,$$
where $\perf(C; \cC_0)$ stands for the category of perfect $\cC_0$-modules and $\perf(C)_i:=q^\ast(\perf(C)) \otimes \cO_{Q/C}(i)$. Note that all the categories $\perf(C)_i$ are equivalent (via a Fourier-Mukai type functor) to $\perf(C)$. Since the functor $U$ sends semi-orthogonal decompositions to direct sums, we then obtain a direct sum decomposition
\begin{equation}\label{eq:decomp}
U(Q) \simeq U(\perf^\dg(C;\cC_0))\oplus U(C)^{\oplus (d-2)}\,,
\end{equation}
where $\perf^\dg(C;\cC_0)$ stands for the dg enhancement of $\perf(C;\cC_0)$ induced from $\perf_\dg(Q)$. As explained in \cite[Prop.~4.9]{Quadrics} (see also \cite[\S2.2]{ABB}), the inclusion of categories $\perf(C;\cC_0) \hookrightarrow \perf(Q)$ is of Fourier-Mukai type. Therefore, the associated kernel leads to a Fourier-Mukai Morita equivalence between $\perf^\dg(C;\cC_0)$ and $\perf_\dg(C;\cC_0)$. Consequently, we can replace the dg category $\perf^\dg(C,\cC_0)$ by $\perf_\dg(C;\cC_0)$ in the above decomposition \eqref{eq:decomp}.

{\bf Item (i).} As explained in \cite[\S3.5]{Quadrics} (see also \cite[\S1.6]{ABB}), the category $\perf(C;\cC_0)$ is equivalent (via a Fourier-Mukai type functor) to $\perf(\widetilde{C}; \cB_0)$, where $\cB_0$ is a certain sheaf of Azumaya algebras over $\widetilde{C}$ of rank $2^{(d/2)-1}$. Therefore, the associated kernel leads to a Fourier-Mukai equivalence between $\perf_\dg(C;\cC_0)$ and $\perf_\dg(\widetilde{C}; \cB_0)$. As proved in \cite[Thm.~2.1]{Azumaya}, since $\cB_0$ is a sheaf of Azumaya algebras of rank $2^{(d/2)-1}$, the noncommutative mixed motive $U(\perf_\dg(\widetilde{C}; \cB_0))_{\bbZ[1/2]}$ is canonically isomorphic to $U(\widetilde{C})_{\bbZ[1/2]}$. Consequently, the $\bbZ[1/2]$-linearization of the right-hand side of \eqref{eq:decomp} reduces to $U(\widetilde{C})_{\bbZ[1/2]}\oplus U(C)_{\bbZ[1/2]}^{\oplus (d-2)}$. 

{\bf Item (ii).} As explained in \cite[Cor.~3.16]{Quadrics} (see also \cite[\S1.7]{ABB}), since by assumption $k$ is algebraically closed and $1/2 \in k$, the category $\perf(C;\cC_0)$ is equivalent (via a Fourier-Mukai type functor) to $\perf(\cX)$. This implies that the dg category $\perf_\dg(C;\cC_0)$ is Morita equivalent to $\perf_\dg(\cX)$. Consequently, since $C$ and $D$ are $k$-smooth, we conclude from the above Proposition \ref{prop:aux} that the right-hand side of \eqref{eq:decomp} reduces to $U(D)\oplus U(C)^{\oplus (d-1)}$. 
\end{proof}
{\bf Item (i).}  As proved in \cite[Thm.~2.8]{Bridge}, there exists a $\bbQ$-linear, fully-faithful, symmetric monoidal functor $\Phi$ making the following diagram commute
\begin{equation}\label{eq:diagram-big}
\xymatrix{
\mathrm{Sm}(k) \ar[rrr]^-{X\mapsto \perf_\dg(X)} \ar[d]_-{M(-)_\bbQ} &&& \dgcat(k) \ar[d]^-{U(-)_\bbQ} \\
\mathrm{DM}_{\mathrm{gm}}(k)_\bbQ \ar[d]_-\pi &&& \mathrm{NMot}(k)_\bbQ \ar[d]^-{\underline{\mathrm{Hom}}(-,U(k)_\bbQ)}\\
\mathrm{DM}_{\mathrm{gm}}(k)_\bbQ/_{\!-\otimes \bbQ(1)[2]} \ar[rrr]_-{\Phi} &&& \mathrm{NMot}(k)_\bbQ\,,
}
\end{equation}
where $\uHom(-,-)$ stands for the internal Hom of the closed symmetric monoidal structure and $\bbQ(1)[2]$ for the Tate object. Since the functor $\pi$, resp. $\Phi$, is additive, resp. fully-faithful and additive, we hence conclude from the combination of Proposition \ref{prop:computation} with the above commutative diagram \eqref{eq:diagram-big} that 
\begin{equation}\label{eq:computation}
\pi(M(Q)_\bbQ) \simeq \pi(M(\widetilde{C})_\bbQ \oplus M(C)_\bbQ^{\oplus (d-2)})\,.
\end{equation}
By definition of the orbit category, there exist then morphisms 
$$ \mathrm{f}=\{f_n\}_{n \in \bbZ} \in \Hom_{\mathrm{DM}_{\mathrm{gm}}(k)_\bbQ}(M(Q)_\bbQ, (M(\widetilde{C})_\bbQ \oplus M(C)_\bbQ^{\oplus (d-1)})(n)[2n])$$
$$ \mathrm{g}=\{g_n\}_{n \in \bbZ} \in \Hom_{\mathrm{DM}_{\mathrm{gm}}(k)_\bbQ}(M(\widetilde{C})_\bbQ \oplus M(C)_\bbQ^{\oplus (d-1)}, M(Q)_\bbQ(n)[2n])$$
verifying the equalities $\mathrm{g}\circ \mathrm{f}=\id=\mathrm{f}\circ \mathrm{g}$; in order to simplify the exposition, we write $-(n)[2n]$ instead of $- \otimes \bbQ(1)[2]^{\otimes n}$. Moreover, only finitely many of these morphisms are non-zero. Let us choose an integer $N\gg 0$ such that $f_n=g_n=0$ for every $|n|>N$. The sets $\{f_n\,|\, -N \leq n \leq N\}$ and $\{g_{-n}(n)\,|\,-N\leq n \leq N\}$ give then rise to the following morphisms between mixed motives:
$$ \alpha\colon M(Q)_\bbQ \too \oplus_{n=-N}^{N} (M(\widetilde{C})_\bbQ \oplus M(C)_\bbQ^{\oplus (d-1)})(n)[2n]$$
$$\beta\colon \oplus_{n=-N}^{N} (M(\widetilde{C})_\bbQ \oplus M(C)_\bbQ^{\oplus (d-1)})(n)[2n] \too M(Q)_\bbQ\,.$$
The composition $\beta\circ \alpha$ agrees with the $0^{\mathrm{th}}$ component of $\mathrm{g}\circ \mathrm{f}=\id$, \ie with the identity of $M(Q)_\bbQ$. Consequently, $M(Q)_\bbQ$ is a direct summand of the direct sum $\oplus_{n=-N}^{N} (M(\widetilde{C})_\bbQ \oplus M(C)^{\oplus (d-1)}_\bbQ)(n)[2n]$. Using the fact that $M(\widetilde{C})_\bbQ$ and $M(C)_\bbQ$ are both Kimura-finite, that $\wedge^2(\bbQ(1)[2])=0$, and that Kimura-finiteness is stable under direct sums, direct summands, and tensor products, we hence conclude that the mixed motive $M(Q)_\bbQ$ is also Kimura-finite. This finishes the proof of the first claim. Let us now prove the second claim.

Let $X$ be a smooth $k$-scheme whose mixed motive $M(X)_\bbQ$ is Kimura-finite. Note that since the functor $\pi$ is symmetric monoidal and additive, the object $\pi(M(X)_\bbQ)$ of the orbit category $\mathrm{DM}_{\mathrm{gm}}(k)_\bbQ/_{\!-\otimes \bbQ(1)[2]}$ is also Kimura-finite. As explained in \cite[\S3]{Bourbaki}, we have the following equality
$$ \mathrm{kim}(M(X)_\bbQ) = \chi(M(X)_{\bbQ, +}) - \chi(M(X)_{\bbQ, -})\,,$$
where $\chi$ stands for the Euler characteristic computed in the rigid symmetric monoidal category $\mathrm{DM}_{\mathrm{gm}}(k)_\bbQ$. Therefore, since the functor $\pi$ is moreover faithful, we observe that $\mathrm{kim}(M(X)_\bbQ)=\mathrm{kim}(\pi(M(X)_\bbQ))$. This leads to the following equalities:
\begin{eqnarray}\label{eq:equalities}
\mathrm{kim}(M(?)_\bbQ)=\mathrm{kim}(\pi(M(?)_\bbQ)) && ? \in \{Q, \widetilde{C}, C\}\,.
\end{eqnarray}
The Kimura-dimension of a direct sum of Kimura-finite objects is equal to the sum of the Kimura-dimension of each one of the objects. Hence, using the above computation \eqref{eq:computation} and the fact that the functor $\pi$ is additive, we conclude that 
\begin{equation}\label{eq:equalities1}
\mathrm{kim}(\pi(M(Q)_\bbQ))= \mathrm{kim}(\pi(M(\widetilde{C})_\bbQ))+ (d-1)\mathrm{kim}(\pi(M(C)_\bbQ))\,.
\end{equation}
The proof of the second claim follows now from the above equalities \eqref{eq:equalities}-\eqref{eq:equalities1}.

{\bf Item (ii).} The proof is similar to the one of item (i): simply replace $\widetilde{C}$ by $D$, $(d-1)$ by $(d-2)$, and use the fact that $\mathrm{kim}(M(D)_\bbQ)=\# D$.

\medbreak\noindent\textbf{Acknowledgments:} The author would like to thank Michel Van den Bergh for his interest in these results/arguments.

\end{document}

\end{proof}